\newcommand{\longsquiggly}{\xymatrix{{}\ar@{<~>}[r]&{}}}
\theoremstyle{plain}
\newtheorem{Theorem}{Theorem}[section]
\newtheorem{Lemma}[Theorem]{Lemma}
\newtheorem{Proposition}[Theorem]{Proposition}
\theoremstyle{Definition}
\newtheorem{Definition}[Theorem]{Definition}
\newtheorem{Remark}[Theorem]{Remark}
\newtheorem{Question}[Theorem]{Question}
\newcommand{\ind}{\mathbf 1}
\newcommand{\R}{\mathbb{R}}
\renewcommand{\d}{\textnormal{ d}}
\newcommand{\diam}{\textnormal{diam}}
\newcommand{\hidethis}[1]{}
\newcommand{\support}{\textnormal{supp}}
\newcommand{\supp}{\textnormal{support}}
\begin{document}

\title{Metric complexity is a Bryant--Tupper diversity}
\author{Gautam Aishwarya}
\address{Michigan State University, Department of Mathematics, East Lansing, MI 48824, USA.}
\email{aishwary@msu.edu}
\author{Dongbin Li}
\address{University of Alberta, Department of Mathematical and Statistical Sciences, Edmonton, AB T6G 2N8, Canada.}
\email{dongbin@ualberta.ca}
\author{Mokshay Madiman}
\address{University of Delaware, Department of Mathematical Sciences, Newark, DE
19716, USA.}
\email{madiman@udel.edu}
\author{Mark Meckes}
\address{Case Western Reserve University, Department of Mathematics, Applied Mathematics, and Statistics, Cleveland, OH 44106, USA.}
\email{mark.meckes@case.edu}

\begin{abstract}
The metric complexity (sometimes called Leinster--Cobbold maximum diversity) of a compact metric space is a recently introduced isometry-invariant of compact metric spaces which generalizes the notion of cardinality, and can be thought of as a metric-sensitive analogue of maximum entropy. On the other hand, the notion of diversity introduced by Bryant and Tupper is an assignment of a real number to every finite subset of a fixed set, which generalizes the notion of a metric. We establish a connection between these concepts by showing that the former quantity naturally produces an example of the latter. Moreover, in contrast to several examples in the literature, the diversity that arises from metric complexity is Minkowski-superadditive for compact subsets of the real line.
\end{abstract}
\thanks{{\it MSC classification}:
		51F99, 
            94A17, 
            54E35, 
	\\\indent GA is supported by NSF-DMS 2154402.
}

\maketitle
\section{Introduction and main results}
In \cite{BT12:1}, Bryant and Tupper considered the following class of set functions, called \emph{diversities}, as a generalization of the notion of a metric.
\begin{Definition} \label{def: BTdiv}
Let $X$ be a set, and $\mathcal{F}(X)$ be the collection of all finite subsets of $X$. A set function $\delta:\mathcal{F}(X)\rightarrow \R$  is called a diversity, if it satisfies the properties:
\begin{enumerate}
    \item $\delta (A) = 0 $ if  and only if the cardinality of $A$ is at most $1$ \emph{(non-degeneracy)}, and
    \item $\delta (A \cup B) \leq \delta (A \cup C) + \delta (B \cup C)$, whenever $C \neq \emptyset$ \emph{(triangle inequality)}. 
\end{enumerate}
\end{Definition}
Indeed, the definition of a metric on a set $X$ can be thought of as a set function satisfying the same properties as above, but defined only on the collection of all subsets of cardinality at most $2$. Moreover, the diameter, 
\[
\diam (A) = \max_{x,y \in A} d (x, y),
\]
is an example of a diversity that extends a given metric $d$ on $X$. 

In \cite{BT12:1}, diversities were introduced to extend the notion of hyperconvexity from metrics to this richer class of set functions. Since then, a number of works have appeared in the direction of extending various aspects of metric space theory to diversities (for example, \cite{BNT17,HN20,HGN22,EP14}). We learn from \cite{BT12:1} that the name diversity was chosen because a special case of the definition appears in the literature on phylogenetics and ecological diversity. 

Approximately around the same time as \cite{BT12:1} appeared, motivated from an ecological diversity perspective, Leinster and Cobbold \cite{LC12} introduced a generalization of R\'enyi entropies of order $\alpha$. Fix a set $X$ and a function $Z : X \times X \to [0,1]$, which will also be considered as a matrix in the sequel. $Z$ is called a \emph{similarity kernel}, and $Z(x,y)$ measures, on scale from $0$ to $1$, how ``similar'' the point $x$ is to the point $y$. $Z(x,y)=0$ is to be understood as complete dissimilarity and $Z(x,y)=1$ indistinguishability.
\begin{Definition} \label{def: kernelalphacomplex}
    Given a probability measure $p = (p_{x})_{x \in X}$ on a finite set $X$, the \emph{kernelized $\alpha$-complexity of $p$ with respect to the kernel $Z$} is defined by 
    \begin{equation}\label{eq:q-comp}
H^{Z}_{\alpha}(p) =
\begin{cases}
\frac{1}{1 - \alpha} \log \left( \sum_{x} p_{x}(Zp)_{x} ^{\alpha - 1}  \right) , & \textnormal{ if } \alpha \in [0,1) \cup (1, \infty), \\
- \sum_{x} p_{x} \log (Zp)_{x}, & \textnormal{ if } \alpha = 1, \\
- \log \left( \max_{x \in \supp(p)} (Zp)_{x} \right), & \textnormal{ if } \alpha = \infty, \\
\end{cases}
    \end{equation}
    where $(Zp)_{x} = \sum_{y \in X} Z(x,y)p_{y}$. 
\end{Definition}

    The quantity $e^{H_{\alpha}^{Z}(p)}$ has been called \emph{the diversity of order $\alpha$ of $p$ with respect to $Z$} in the literature. Here we suggest the nomenclature of ``complexity'' for $H_{\alpha}^{Z}(p)$ to avoid confusion with ``diversity'' in the sense of Bryant and Tupper. 
\begin{Remark}
The Kronecker delta kernel $Z(x,y) = \delta_{x,y}$ gives R\'enyi entropies:
\[
H^{Z}_{\alpha}(p) = H_{\alpha}(p) : = \frac{1}{1 - \alpha} \log \left ( \sum_{x} p_{x}^{\alpha} \right).
\]
This corresponds to the situation when any two points $x$ and $y$ are either the same or completely dissimilar. 
\end{Remark}

Cardinality is related to maximum entropy in the following manner. Let $X$ be a finite set. Then $\max_{p \in \mathcal{P}(X)} H_{\alpha}(p) = \log \#(X)$, where $\mathcal{P}(X)$ is the collection of all probability measures on $X$ and $\# (\cdot)$ denotes cardinality. Note that this representation of cardinality holds for any fixed choice of $\alpha$.

In a similar spirit, a cardinality-like notion that is sensitive to similarity can be introduced.

\begin{Definition} \label{def: maxdiv}
    Let $X$ be a finite set and $Z$ a similarity kernel on $X$ which is symmetric (that is, $Z(x,y) = Z(y,x)$ for all $x,y \in X$), and satisfies $Z(x,x) > 0$ for all $x$. Then, the \emph{kernelized complexity of $X$ with respect to $Z$} is defined by 
\begin{equation}
        C^{Z}(X) = \sup_{p \in \mathcal{P}(X)} H_{\alpha}^{Z}(p).
\end{equation}
\end{Definition}
Thanks to a result of Leinster and Meckes \cite[Theorem 1]{LM16}, the choice of $\alpha$ in the above definition does not matter. 

From this point onwards, we will fix a metric space $(X, d)$ (not necessarily consisting of finitely many points), and work with the similarity kernel $Z (x,y) = e^{-  d (x,y)}$,
which is called the \emph{Laplace kernel}.

\begin{Definition}
    Let $(X,d)$ be a metric space, and $A$ be a finite subset of $X$. The complexity of $A$, denoted $C(A)$, is defined as the kernelized complexity of $A$ with respect to the Laplace kernel $Z(x,y) = e^{- d (x,y)}$.
\end{Definition}

When our set is already equipped with a metric, a key reason for restricting attention to the Laplace kernel is that the kernelized complexity for this kernel is closely related to another invariant called \emph{magnitude}. If one thinks of a metric space as an enriched category in the sense of Lawvere \cite{Law73}, a natural Euler characteristic-type invariant associated with enriched categories specializes to magnitude. This is explained in the foundational work \cite{Lei13} of Leinster. The notion of complexity is obtained from log-magnitude by altering its definition slightly, and is arguably a more tractable invariant with a probabilistic interpretation. We recommend the works \cite{LM17:1, Mec15} for an overview of magnitude, including its relationship with complexity.

\begin{Definition} \label{def: complexitymetricspace}
Fix $t > 0$. For every finite subset $A \subseteq X$, 
define $C^{t}(A)$  as the complexity of the metric space $(A , t d)$. For all compact sets $K \subseteq X$, define 
\[
C^{t}(K) = \sup \{ C^{t} (A) : A \subseteq K, \#(A) < \infty \}.
\]
By convention, we set $C^t(\emptyset)=0$ for the empty set.
\end{Definition}

    As a consequence of this definition, the complexity $C(X)=C^{1}(X)$ is well defined for any compact metric space $X$. The complexity of a compact metric space $X$ can also be defined directly \`a la Definition \ref{def: maxdiv}, using the notion of kernelized $\alpha$-complexity for probability measures that are not necessarily finitely supported, which is obtained by replacing the sums in Definition \ref{def: kernelalphacomplex} with integrals. For more details, see \cite[Definition 4.1]{LR21}-- a key point in the general setting is that the object $Zp$ obtained by applying the Laplace kernel to any probability measure is always a continuous function that is essentially bounded with respect to $p$, so that the generalization of \eqref{eq:q-comp} to arbitrary probability measures on a compact metric space makes sense. Sometimes we will write $Zp (x)$ instead of $\left(Zp\right)_{x}$, especially when $p$ is not finitely supported. The equivalence of the two definitions of complexity discussed here was first established in \cite[Theorem 2.4]{Mec13} (from the proof one can see that the assumption of \emph{positive-definiteness} stated there is not needed).

Our main result establishes a connection between the notions of metric complexity and the diversity of Bryant--Tupper.

\begin{Theorem} \label{thm: main}
    Let $(X, d)$ be a metric space, and $t > 0$. Then
    both 
\begin{equation} \label{eq: divfroment}
\kappa^t:=\exp \{C^{t}\}- 1
\end{equation}
    and $C^t$ 
    are diversities (in the sense of Definition \ref{def: BTdiv}).
\end{Theorem}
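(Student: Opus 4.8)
The plan is to verify the two defining properties of a diversity for $C^t$, and then to deduce everything from a single stronger inequality. Throughout I will use the cited Leinster--Meckes theorem to evaluate $C^t$ with the choice $\alpha=2$, which gives the clean quadratic-form representation
\[
C^t(S)=-\log M(S),\qquad M(S):=\min_{p\in\mathcal{P}(S)}\ \sum_{x,y\in S}e^{-t\,d(x,y)}\,p_x p_y ,
\]
for every finite $S$; write $Z(x,y)=e^{-t\,d(x,y)}$, $D:=\exp\{C^t\}=1/M$, so that $\kappa^t=D-1$. This representation makes three basic facts immediate. First, non-degeneracy: if $\#(S)\le1$ the only measure is a point mass and $M(S)=1$, so $C^t(S)=0$; conversely, for distinct $x,y\in S$ the measure $\tfrac12(\delta_x+\delta_y)$ gives $M(S)\le\tfrac12(1+e^{-t\,d(x,y)})<1$, hence $C^t(S)>0$, and the same dichotomy passes to $\kappa^t=D-1$. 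Second, since $S\subseteq S'$ gives $\mathcal{P}(S)\subseteq\mathcal{P}(S')$ and hence $M(S')\le M(S)$, both $C^t$ and $\kappa^t$ are monotone under inclusion. Third, $M$, and hence $D$, is antitone in the similarity matrix: if $\widetilde Z\le Z$ entrywise with nonnegative entries, then $p^{\mathsf T}\widetilde Z p\le p^{\mathsf T}Zp$ for all $p$, so $D_{\widetilde Z}\ge D_{Z}$.

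The crucial reduction is that the whole theorem follows from the single inequality
\begin{equation}
D(A\cup B)+1\ \le\ D(A\cup\{c_0\})+D(B\cup\{c_0\})
\tag{$\star$}
\end{equation}
for one point $c_0$; set $D_1:=D(A\cup\{c_0\})$ and $D_2:=D(B\cup\{c_0\})$. Indeed, $(\star)$ is literally the triangle inequality for $\kappa^t$ when $C=\{c_0\}$; and since $D\ge1$ we have $D_1+D_2-1\le D_1D_2$ (because $(D_1-1)(D_2-1)\ge0$), so $(\star)$ also yields $D(A\cup B)\le D_1D_2$, i.e. the triangle inequality for $C^t=\log D$ when $C=\{c_0\}$. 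Finally, an arbitrary nonempty $C$ is reduced to a single point by monotonicity: choosing $c_0\in C$ gives $A\cup\{c_0\}\subseteq A\cup C$ and $B\cup\{c_0\}\subseteq B\cup C$, so the singleton inequality upgrades to the full triangle inequality for $C^t$ and for $\kappa^t$. Thus everything rests on $(\star)$.

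To prove $(\star)$ I would exploit the metric triangle inequality through $c_0$. From $d(x,y)\le d(x,c_0)+d(c_0,y)$ one gets the \emph{screening inequality} $Z(x,y)\ge Z(x,c_0)Z(c_0,y)$ for all $x\in A$, $y\in B$. Let $U=A\cup B\cup\{c_0\}$ and let $\widetilde Z$ agree with $Z$ except that the $A$--$B$ cross entries are lowered to their factored values $Z(x,c_0)Z(c_0,y)$; then $\widetilde Z\le Z$, while $\widetilde Z=Z$ on $A\cup\{c_0\}$ and on $B\cup\{c_0\}$. Combining monotonicity and antitonicity,
\[
D(A\cup B)\ \le\ D_{Z}(U)\ \le\ D_{\widetilde Z}(U),
\]
so it suffices to prove the factored inclusion--exclusion bound $D_{\widetilde Z}(U)\le D_1+D_2-1$, for which the $A$--$B$ coupling has rank one and factors entirely through $c_0$. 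Writing $\rho=(\rho_A,\rho_{c_0},\rho_B)$, $F=\sum_a Z(a,c_0)\rho_a$, $G=\sum_b Z(c_0,b)\rho_b$, the quadratic form decomposes as
\[
\rho^{\mathsf T}\widetilde Z\rho=\bigl(\rho_A^{\mathsf T}Z_{AA}\rho_A-F^2\bigr)+\bigl(\rho_B^{\mathsf T}Z_{BB}\rho_B-G^2\bigr)+\bigl(\rho_{c_0}+F+G\bigr)^2 ,
\]
exhibiting $c_0$ as mediating all interaction between the $A$- and $B$-blocks. This separation reduces the factored bound to comparing the minimum of the displayed form over $\mathcal{P}(U)$ with the corresponding minima over $\mathcal{P}(A\cup\{c_0\})$ and $\mathcal{P}(B\cup\{c_0\})$, an elementary constrained optimization; when the optimal weightings are everywhere positive it is in fact the exact identity $D_{\widetilde Z}(U)+1=D_1+D_2$.

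The hard part will be exactly this last step. Maximum diversity is not the magnitude of the whole space but a maximum of magnitudes over positively weighted subspaces, and the minimizer of $M(U)$ may assign zero mass to part of $U$ (for instance to $c_0$ itself, as happens for a metric star with many leaves); there the clean inclusion--exclusion identity degrades to precisely the inequality $D_{\widetilde Z}(U)\le D_1+D_2-1$ that we need. The delicate point is to show that this factored inequality holds with no positivity hypothesis. I expect to establish it either by analyzing the constrained quadratic minimization directly through the decomposition above, or by a continuity argument that perturbs to the positively weighted case and passes to the limit; making this robust is the technical heart of the proof.
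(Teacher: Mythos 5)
Your reduction is sound and in fact mirrors the paper's own strategy more closely than you may realize: non-degeneracy and monotonicity are easy, and the triangle inequality reduces (by picking $c_0\in C$ and using monotonicity) to the single inequality $D(A\cup B)+1\le D(A\cup\{c_0\})+D(B\cup\{c_0\})$, which simultaneously gives the diversity property for $\kappa^t$ and, via $D_1+D_2-1\le D_1D_2$, for $C^t=\log D$. Moreover your kernel $\widetilde Z$, obtained by lowering the $A$--$B$ cross entries to $Z(x,c_0)Z(c_0,y)$, is precisely the Laplace kernel of the wedge sum $(A\cup\{c_0\})\vee(B\cup\{c_0\})$ glued at $c_0$, so the bound you still need, $D_{\widetilde Z}(U)\le D_1+D_2-1$, is exactly the paper's key wedge-sum theorem $\kappa(A\vee B)\le\kappa(A)+\kappa(B)$.

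That, however, is where the proposal stops being a proof. You explicitly defer the factored inequality (``I expect to establish it either by \dots or by \dots''), and the quadratic-form decomposition you offer does not by itself deliver it: knowing that $\rho^{\mathsf T}\widetilde Z\rho=Q_A(\rho_A)+Q_B(\rho_B)+(\rho_{c_0}+F+G)^2$ does not let you compare $\min_{\rho\in\mathcal{P}(U)}\rho^{\mathsf T}\widetilde Z\rho$ with $M_1=1/D_1$ and $M_2=1/D_2$, because the target relation $M_{\widetilde Z}(U)\ge M_1M_2/(M_1+M_2-M_1M_2)$ is neither a sum nor a product of blockwise minima, and the marginals $\rho_A,\rho_B$ of a competitor on $U$ are not probability measures on the two halves. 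The paper closes this gap with a genuinely different tool: the Leinster--Meckes characterization of the complexity-attaining measure $\rho$ (namely $Z\rho\equiv 1/R$ on its support), which is used to build measures $\widetilde\mu=\mu+(N+\epsilon)\delta_{c_0}$ and $\widetilde\nu=\nu+(M+\epsilon)\delta_{c_0}$ that are scalar multiples of weight measures on subsets of the two halves, whence $D_1+D_2\ge R(a+N+\epsilon)+R(b+M+\epsilon)=R+1$; a separate computation with the order-one diversity handles the case where $c_0$ carries no mass. You correctly flagged the support-degeneracy issue as the ``technical heart,'' but identifying the difficulty is not the same as resolving it; without a proof of the factored inequality the argument is incomplete.
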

\begin{Remark}
The diversities $\kappa^{t}(\cdot)$ are intrinsic, that is, $\kappa^{t} (A)$ does not depend on the isometric embedding $A \hookrightarrow X$ but only on the metric structure of $A$. Thus, $\kappa^{t}$ can be seen as a numerical invariant on the class of finite metric spaces. 
\end{Remark}
\begin{Remark}
   The restriction of the diversity $\kappa^{t}$ to two element subsets defines a metric, $\tilde{d}^{t}$. Explicitly, we have, $\tilde{d}^{t} = \tanh \frac{td}{2}$. This follows from \cite[Examples 2.1.1 (ii)]{Lei13}. Note that \cite{Lei13} is about the invariant \emph{magnitude}, however, magnitude and metric complexity agree for compact metric spaces isometric to a subset of the real line.
\end{Remark}
Having proved that metric complexity $C^t$ naturally induces the diversity $\kappa^t$, we now proceed to establish two interesting properties of these quantities. The first relates to the behavior of $\kappa^t$ under Minkowski summation when the ambient space is the real line, while the second relates to its behavior under taking unions in a general metric space.

When the underlying metric space $(X,d)$ is assumed to be $\ell_{2}^{n}$, that is, $\R^n$ with the standard Euclidean metric, many examples of diversities satisfy additivity or subadditivity with respect to Minkowski sums $A + B = \{ a + b : a \in A, b \in B \}$; we henceforth call these properties Minkowski-additivity or Minkowski-subadditivity, respectively. The recent work \cite{BT24} is devoted to the study of diversities that are \emph{Minkowski-sublinear}, that is, Minkowski-subadditive and $1$-homogeneous. The examples constructed in the present work demonstrate different behaviour, namely Minkowski-superadditivity and inhomogeneity, as reflected by the following result in dimension one. 

\begin{Theorem} \label{thm: cdbm}
    Let $A,B \subseteq \R$ be non-empty and compact. Then 
    \begin{equation} \label{eq: CauchyDavenport}
\kappa^{t} (A + B) \geq \kappa^{t} (A)  + \kappa^{t} (B), 
    \end{equation}
    for all $t$. Further, if $\lambda \in [0,1]$, then we have
\begin{equation} \label{eq: BrunnMinkowski}
        \kappa^{t} ((1- \lambda) A + \lambda B ) \geq (1-\lambda) \kappa^{t} (A) + \lambda \kappa^{t} (B). 
    \end{equation}
\end{Theorem}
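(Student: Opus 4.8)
The plan is to reduce both inequalities to a single statement about the ``gap content'' of a compact subset of the line, and then to prove that statement by an elementary neighbourhood-volume estimate. The first step is to record a closed formula for $\kappa^t$ on $\R$. Since the Laplace kernel $e^{-t|x-y|}$ is positive definite on $\R$, the Leinster--Meckes theory identifies $\exp\{C^t(A)\}$ with the magnitude $\Mag(tA)$ for finite $A\subseteq\R$, and for points $x_1<\dots<x_n$ the similarity matrix has a tridiagonal inverse, whence $\Mag(tA)=1+\sum_i\tanh\!\big(\tfrac{t}{2}(x_{i+1}-x_i)\big)$. Taking the supremum over finite subsets of a compact $K$ --- refining inside the ``solid'' part of $K$ and jumping across each complementary interval, the optimality of which follows from the subadditivity of $\tanh$ --- yields
\[
\kappa^t(K)=\frac{t}{2}\,|K|+\sum_{\text{gaps }g}\tanh\!\Big(\frac{tg}{2}\Big)=\frac{t}{2}\diam(K)-\phi(K),\qquad \phi(K):=\sum_{\text{gaps }g}f(g),
\]
where $|K|$ is Lebesgue measure, the gaps are the lengths of the bounded components of $\R\setminus K$, and $f(g)=\tfrac{t}{2}g-\tanh(\tfrac{t}{2}g)$ is nonnegative and convex with $f(0)=f'(0)=0$.

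Because $\diam$ is additive under Minkowski sums and linear under the scaling $K\mapsto\lambda K$, subtracting the formula shows that \eqref{eq: CauchyDavenport} is equivalent to $\phi(A+B)\le\phi(A)+\phi(B)$ and \eqref{eq: BrunnMinkowski} to $\phi((1-\lambda)A+\lambda B)\le(1-\lambda)\phi(A)+\lambda\phi(B)$. I would then linearize $\phi$ through the layer-cake identity $f(g)=\int_0^\infty (g-s)_+\,f''(s)\,\dd s$, valid since $f(0)=f'(0)=0$, which gives $\phi(K)=\int_0^\infty\Phi_s(K)\,f''(s)\,\dd s$ with $\Phi_s(K):=\sum_{\text{gaps}}(g-s)_+$. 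As $f''\ge0$, it suffices to prove each inequality for every $\Phi_s$ separately. The key observation is that $\Phi_s$ is a complemented neighbourhood volume:
\[
\Phi_s(K)=\diam(K)-\mu_{s/2}(K),\qquad \mu_c(K):=\big|\{z\in\conv(K):\operatorname{dist}(z,K)\le c\}\big|,
\]
so that, $\diam$ being Minkowski-linear, everything reduces to the superadditivity (for \eqref{eq: CauchyDavenport}) and concavity (for \eqref{eq: BrunnMinkowski}) of $\mu_c$ under Minkowski operations.

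The heart of the argument is the bound $\mu_c(A+B)\ge\mu_c(A)+\mu_c(B)$. Here I would use only the soft inequality $\operatorname{dist}(x+y,A+B)\le\operatorname{dist}(x,A)+\operatorname{dist}(y,B)$ to exhibit two essentially disjoint subsets of $\{z\in\conv(A+B):\operatorname{dist}(z,A+B)\le c\}$: a lower copy $\{x\in\conv(A):\operatorname{dist}(x,A)\le c\}+\min B$ and an upper copy $\max A+\{y\in\conv(B):\operatorname{dist}(y,B)\le c\}$. Both lie in the neighbourhood, because in each case one distance term is zero; they meet only at the single point $\max A+\min B$; and their measures are exactly $\mu_c(A)$ and $\mu_c(B)$, giving the claim. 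For \eqref{eq: BrunnMinkowski} the same two-copy construction applied to $(1-\lambda)A$ and $\lambda B$ gives $\mu_c((1-\lambda)A+\lambda B)\ge\mu_c((1-\lambda)A)+\mu_c(\lambda B)$, after which the scaling monotonicity $\mu_c(\lambda K)\ge\lambda\,\mu_c(K)$ for $\lambda\in[0,1]$ --- valid because thickening by a fixed radius $c$ fills a larger fraction of each shrunken gap --- completes the proof. The steps I expect to require the most care are the boundary bookkeeping in the two-copy construction and the rigorous passage from the supremum over finite subsets to the closed formula; once the reduction to $\mu_c$ is in place, the inf-convolution inequality is elementary and the geometry is transparent.
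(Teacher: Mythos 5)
Your argument is correct, but it takes a genuinely different route from the paper's. The paper proves \eqref{eq: CauchyDavenport} softly: after translating so that $\max A = 0 = \min B$, it notes $A\cup B\subseteq A+B$ and that $A\cup B$ is isometric to the wedge sum $(A,0)\vee(B,0)$, for which $\kappa^t(A\vee B)\geq \kappa^t(A)+\kappa^t(B)$ holds (in fact with equality, by the one-dimensional formula; Theorem~\ref{thm: wedgediv} supplies the reverse inequality). For \eqref{eq: BrunnMinkowski} it then only needs the scaling bound $D(\lambda E)\geq\lambda D(E)+(1-\lambda)$, extracted from the explicit formula of \cite[Theorem 4.1]{LM17:1} via a concavity-in-$\lambda$ argument, combined with the first part applied to $(1-\lambda)A$ and $\lambda B$. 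You instead make the explicit formula the engine of the whole proof (you could simply cite \cite[Theorem 4.1]{LM17:1} rather than sketch its derivation), rewrite $\kappa^t$ as $\tfrac{t}{2}\diam-\phi$ for a gap-deficiency functional $\phi$, linearize $\phi$ by the layer-cake identity, and reduce both inequalities to superadditivity of the truncated neighbourhood volume $\mu_c$ under Minkowski sum, which your two-copy construction establishes cleanly: the copies $\{x\in\conv(A):\operatorname{dist}(x,A)\leq c\}+\min B$ and $\max A+\{y\in\conv(B):\operatorname{dist}(y,B)\leq c\}$ do lie in the neighbourhood of $A+B$, overlap only at $\max A+\min B$, and the scaling step $(\lambda g-2c)_+\leq\lambda(g-2c)_+$ is just convexity of the positive part. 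Your route buys a single elementary mechanism that yields both parts at once, and in fact a pointwise-in-$s$ strengthening (each $\Phi_s$ is separately Minkowski-subadditive); its cost is that it is locked to dimension one and to the closed formula, whereas the paper's first-part argument runs through the wedge-sum inequality that is the structural theme of the paper and hints at possible higher-dimensional analogues.
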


Finally, we demonstrate a fractional subadditivity property of complexity with respect to unions.

\begin{Definition}
Denote by $2^{[n]}$ the collection of all subsets of $[n]= \{ 1, \ldots , n \}$. A function $\beta: 2^{[n]} \rightarrow \R_+ $ is called a fractional partition if for each $i\in [n]$, we have $\sum_{s\in 2^{[n]}} \beta(s)\ind_{s} (i) =1.$
A set function $f: 2^{[n]} \rightarrow \R$ is said to be fractionally subadditive if 
$$
f([n])\leq \sum_{s \in 2^{[n]}} \beta (s) f(s) 
$$
for all fractional partitions $\beta$.
\end{Definition}

Fractional subadditivity is a stronger property than (usual) subadditivity; indeed, it reduces to subadditivity when $\beta$ is chosen such that $\beta(\{ i \}) = 1$ for all singletons $\{i \}$ and $0$ otherwise. Fractional subadditivity arises naturally in connection with entropy; see, e.g., \cite{MT10, MG19}.

\begin{Theorem} \label{thm: subadditivecomplex}
Let $(X,d)$ be a metric space, $A_1, A_2, \cdots, A_n  \subseteq X$ be compact subsets, and $\beta: 2^{[n]} \to [0, \infty)$ be a fractional partition. Then, for every $t >0$,
\[ \exp C^{t} \left( \bigcup_{i=1}^n A_i \right)  \leq \sum_{s \in 2^{[n]}} \beta (s) \exp C^{t} \left( \bigcup_{i \in s} A_i \right).\]
In other words, given any compact subsets $A_1, A_2, \cdots, A_n$ of $X$,
the set function $f(s):=1+\kappa^t\left( \bigcup_{i\in s} A_i \right)$ is fractionally subadditive.
\end{Theorem}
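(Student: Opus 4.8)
The plan is to establish the displayed inequality $\exp C^{t}(\bigcup_{i} A_{i}) \le \sum_{s} \beta(s)\exp C^{t}(\bigcup_{i\in s} A_{i})$ directly, using the weighting (linear-programming) description of maximum diversity as the main engine. For a finite set $B$ with Laplace matrix $Z_{t} = (e^{-t d(x,y)})_{x,y\in B}$, I will use the identity
\[
\exp C^{t}(B) = \max\big\{ \ind^{\top} w : w \in \R^{B},\ w \ge 0,\ Z_{t} w \le \ind \big\},
\]
where $\ind$ denotes the all-ones vector and $Z_{t}w \le \ind$ is understood entrywise. One direction is elementary and is all I need for the pieces: if $w \ge 0$ is feasible and nonzero then $p = w/\ind^{\top}w$ is a probability vector with $(Z_{t}p)_{y} = (Z_{t}w)_{y}/\ind^{\top}w \le 1/\ind^{\top}w$ for every $y$, so that $\exp H^{Z_{t}}_{\infty}(p) = 1/\max_{y\in \supp p}(Z_{t}p)_{y} \ge \ind^{\top} w$; since the value of $C^{t}$ is independent of the order (Leinster--Meckes) we may evaluate it at $\alpha = \infty$, giving $\exp C^{t}(B) \ge \ind^{\top} w$. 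Moreover the feasible region is compact, because $w_{y} \le (Z_{t}w)_{y} \le 1$, so the maximum is attained. The reverse inequality --- that the supremum defining complexity is actually achieved by such a weighting --- is the maximum-diversity theorem of Leinster--Meckes \cite{LM16}, which I would invoke rather than reprove.

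I would first reduce to the case of finite $A_{i}$. Since $C^{t}$ on compacta is a supremum over finite subsets it is monotone under inclusion; and for finite $F \subseteq \bigcup_{i} A_{i}$, putting $F_{i} = F \cap A_{i}$ gives $\bigcup_{i} F_{i} = F$ and $\bigcup_{i\in s} F_{i} = F \cap \bigcup_{i\in s} A_{i} \subseteq \bigcup_{i\in s} A_{i}$. Thus the finite inequality applied to $\{F_{i}\}$ together with monotonicity yields $\exp C^{t}(F) \le \sum_{s} \beta(s)\exp C^{t}(\bigcup_{i\in s} A_{i})$, and taking the supremum over such $F$ delivers the general statement.

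The heart of the argument is a splitting of the optimal weighting. Assuming the $A_{i}$ finite, write $U = \bigcup_{i} A_{i}$, $U_{s} = \bigcup_{i\in s} A_{i}$, and let $w^{*} \ge 0$ be optimal on $U$, so $Z_{t} w^{*} \le \ind$ and $\ind^{\top} w^{*} = \exp C^{t}(U)$. For $x \in U$ set $I(x) = \{ i : x \in A_{i}\} \ne \emptyset$ and $\Sigma_{x} = \sum_{s:\, s\cap I(x) \ne \emptyset} \beta(s)$; then $x \in U_{s}$ exactly when $s \cap I(x) \ne \emptyset$, and choosing any $i_{0} \in I(x)$ gives $\Sigma_{x} \ge \sum_{s \ni i_{0}} \beta(s) = 1$ by the defining property of a fractional partition. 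I would then define a weighting $w_{s}$ on $U_{s}$ by $w_{s}(x) = w^{*}(x)/\Sigma_{x}$. Because $0 \le 1/\Sigma_{x} \le 1$, for every $y \in U_{s}$ one has $(Z_{t} w_{s})_{y} \le \sum_{x \in U_{s}} e^{-t d(y,x)} w^{*}(x) \le (Z_{t} w^{*})_{y} \le 1$, so each $w_{s}$ is feasible on $U_{s}$, whence $\ind^{\top} w_{s} \le \exp C^{t}(U_{s})$ by the elementary direction above. Interchanging the order of summation and using $\sum_{s:\, x\in U_{s}} \beta(s) = \Sigma_{x}$,
\[
\sum_{s} \beta(s)\, \ind^{\top} w_{s} = \sum_{x \in U} \frac{w^{*}(x)}{\Sigma_{x}} \sum_{s:\, x \in U_{s}} \beta(s) = \sum_{x \in U} w^{*}(x) = \exp C^{t}(U),
\]
and combining this equality with $\ind^{\top} w_{s} \le \exp C^{t}(U_{s})$ and $\beta \ge 0$ gives the claim.

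The main obstacle is not the combinatorics but securing the weighting identity in the stated generality. The damping factors $1/\Sigma_{x}$ are engineered to perform two apparently conflicting tasks at once --- to stay $\le 1$, so that feasibility of $w^{*}$ descends to every $w_{s}$, yet to average back (against $\beta$) to the full mass of $w^{*}$ --- and this succeeds for the single reason that $\Sigma_{x} \ge 1$. What genuinely needs care is the input from the first paragraph: I would want the attained nonnegative-weighting form of maximum diversity for the Laplace matrix $Z_{t}$ \emph{without} any positive-definiteness hypothesis, since the theorem is asserted for an arbitrary metric space, and this is the step I expect to lean on \cite{LM16} for. Once that is in hand the combinatorial splitting is routine and the passage from finite sets to compacta is handled purely by the supremum definition.
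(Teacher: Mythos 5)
Your overall strategy --- take an optimal weighting $w^{*}$ of the union and split it across the sets $U_{s}$ with the damping factors $1/\Sigma_{x}$ --- is genuinely different from the paper's route (which proves fractional subadditivity of $\exp H^{Z}_{\alpha}$ for mixtures of mutually singular measures, Proposition~\ref{prop: fracsubmultcomplex}, and then disjointifies the cover), and the combinatorial core of your argument is correct. However, the identity you build everything on, namely $\exp C^{t}(B) = \max\{\ind^{\top}w : w\ge 0,\ Z_{t}w\le\ind\}$ with the constraint imposed at \emph{every} point of $B$, is false, and it is not what Leinster--Meckes prove. Concretely, take $B=\{a_1,a_2,a_3,c\}$ with $d(a_i,a_j)=2s$, $d(a_i,c)=s$, and $u=e^{-s}\in(1/2,1)$. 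The uniform distribution $p$ on $\{a_1,a_2,a_3\}$ satisfies $(Zp)_{a_i}=(1+2u^2)/3$ on its support, so $\exp H^{Z}_{\infty}(p)=3/(1+2u^2)$ and hence $\exp C(B)\ge 3/(1+2u^2)$. On the other hand, the everywhere-constrained LP has a symmetric optimum $w=(\alpha,\alpha,\alpha,\gamma)$, and the constraint at $c$ reads $3u\alpha+\gamma\le 1$, which forces $\ind^{\top}w=3\alpha+\gamma\le \tfrac{1}{u}(3u\alpha+\gamma)\le \tfrac{1}{u}<\tfrac{3}{1+2u^2}$ for $u\in(1/2,1)$. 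The underlying issue is that a maximizing distribution satisfies $(Zp)_{x}=1/D_{\max}$ only \emph{on its support}, while off the support one generally has $(Zp)_{y}>1/D_{\max}$; so the optimal weight measure need not be feasible for your LP, and the LP value can be strictly smaller than the complexity.

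The good news is that your argument survives once the characterization is stated correctly: by \cite[Lemma 3 and Theorem 2]{LM16} there is a $w^{*}\ge 0$ with $\ind^{\top}w^{*}=\exp C^{t}(U)$ and $(Z_{t}w^{*})_{y}\le 1$ for $y\in\operatorname{supp}w^{*}$ \emph{only}, and conversely your $\alpha=\infty$ computation shows that any $w\ge 0$ with $(Z_{t}w)_{y}\le 1$ on $\operatorname{supp}w$ gives $\exp C^{t}\ge\ind^{\top}w$ (the maximum in the definition of $D_{\infty}$ is likewise taken over the support, so nothing is lost). Since $\operatorname{supp}w_{s}\subseteq\operatorname{supp}w^{*}$, your chain $(Z_{t}w_{s})_{y}\le(Z_{t}w^{*})_{y}\le 1$ remains valid for every $y\in\operatorname{supp}w_{s}$, so each $w_{s}$ is feasible in the corrected, support-restricted sense and the proof closes; no positive-definiteness is needed, only symmetry and positive diagonal. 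With that repair your route is a legitimate alternative to the paper's, and it has the incidental advantage of never needing to disjointify the cover, since the factors $1/\Sigma_{x}\le 1$ absorb multiple memberships directly.
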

\begin{Remark}
    We may express the conclusion of Theorem~\ref{thm: subadditivecomplex} as the statement that $1+\kappa^t$
    is fractionally subadditive with respect to unions.
    Note that in contrast, for a general Bryant--Tupper diversity $\delta$, the set function $1+\delta$ need not even be subadditive with respect to unions. For example, consider the diameter diversity $\delta = \diam$. To see why $1+ \delta$ is not subadditive, take non-empty sets $A, B$, a unit vector $v$, and look at $\diam \left( (A + tv) \cup (B - tv) \right)$ as $t \to \infty$. 
\end{Remark}
The rest of the paper is organized as follows. In Section~\ref{sec:wedge}, we give the proof of our main result, namely Theorem~\ref{thm: main}, as a consequence of a new observation about the wedge sum of pointed metric spaces. In Section~\ref{sec:prop}, we prove the properties of complexity described in Theorems \ref{thm: cdbm} and \ref{thm: subadditivecomplex}. The proof of the latter goes through a corresponding subadditivity property for exponentiated $\alpha$-complexity of probability measures (Proposition~\ref{prop: fracsubmultcomplex}), which may be of independent interest. Finally we conclude in Section~\ref{sec:concl} with some remarks and discussion of open questions.

\subsection*{Acknowledgments} We would like to thank the anonymous referee for their valuable comments and suggestions, in particular, for suggesting a simpler proof of the reverse implication in Lemma \ref{lem: requirementtheirdivourdiv}.

\section{The wedge sum and the proof of Theorem~\ref{thm: main}}
\label{sec:wedge}

We start with two general observations about diversities that may of independent interest. The first provides a recipe, given a particular diversity, for generating a new diversity that ``grows more slowly'' when moving to supersets.

\begin{Lemma}\label{lem:derived-div}
If $\delta (\cdot)$ is a diversity, then so is $\widetilde{\delta} = \log \left( \delta + 1 \right)$.    
\end{Lemma}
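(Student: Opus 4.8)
The plan is to verify directly that $\widetilde\delta = \log(\delta+1)$ satisfies the two defining properties in Definition~\ref{def: BTdiv}. The non-degeneracy property is essentially immediate: the map $t \mapsto \log(t+1)$ is strictly increasing on $[0,\infty)$ and vanishes exactly at $t = 0$, so $\widetilde\delta(A) = 0$ if and only if $\delta(A) = 0$, which by hypothesis holds if and only if $\#A \le 1$. The substantive content is therefore the triangle inequality.

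For the triangle inequality I would exponentiate. Fixing finite sets $A, B, C$ with $C \neq \emptyset$, the target inequality $\log(\delta(A\cup B)+1) \le \log(\delta(A\cup C)+1) + \log(\delta(B\cup C)+1)$ is, after applying the increasing exponential map, equivalent to the submultiplicative form
\[
\delta(A \cup B) + 1 \le \bigl(\delta(A\cup C)+1\bigr)\bigl(\delta(B\cup C)+1\bigr).
\]
Writing $a = \delta(A\cup C)$ and $b = \delta(B\cup C)$, the right-hand side expands as $1 + a + b + ab$. Thus it suffices to combine the triangle inequality for $\delta$, namely $\delta(A\cup B) \le a + b$, with non-negativity of the cross term $ab$, via the chain $1 + a + b + ab \ge 1 + a + b \ge 1 + \delta(A\cup B)$.

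The one preliminary fact this uses is that diversities are non-negative, which guarantees $ab \ge 0$. I would record this first: for any nonempty finite $A$, applying the triangle inequality of $\delta$ with all three sets equal to $A$ gives $\delta(A) = \delta(A\cup A) \le \delta(A\cup A) + \delta(A\cup A) = 2\delta(A)$, whence $\delta(A) \ge 0$; and $\delta(\emptyset) = 0$ by non-degeneracy. I do not expect a genuine obstacle here. The real content of the lemma is the elementary observation that $\log$ transports the additive triangle inequality into its submultiplicative counterpart, and the only care required is to note that diversities are non-negative so that the product term $ab$ can be discarded in the favorable direction.
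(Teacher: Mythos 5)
Your proof is correct and follows essentially the same route as the paper: exponentiate the target inequality to the submultiplicative form $\delta(A\cup B)+1 \le (\delta(A\cup C)+1)(\delta(B\cup C)+1)$ and discard the non-negative cross term $\delta(A\cup C)\delta(B\cup C)$ using the triangle inequality for $\delta$. Your explicit verification that diversities are non-negative is a nice touch the paper leaves implicit, but the argument is the same.
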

\begin{proof}
The set function $\widetilde{\delta}$ clearly satisfies the first requirement of Definition \ref{def: BTdiv}. For the second condition, we note that $\delta (A \cup C) \delta (B \cup C) \geq 0$, and hence, $\delta (A \cup B) \leq \delta (A \cup C) + \delta (B \cup C) + \delta (A \cup C) \delta (B \cup C)$ whenever $C$ is not empty. Now, the last inequality can be easily seen to be equivalent to the second requirement in Definition \ref{def: BTdiv} for $\widetilde{\delta}$. 
\end{proof}

Bryant and Tupper \cite{BT24} observe that if a set function satisfies the non-degeneracy condition in  Definition \ref{def: BTdiv}, the triangle inequality condition is equivalent to the combination of monotonicity and subadditivity for sets with nonempty intersection. 
Our second general observation refines their observation, providing a simpler way of verifying that a given set function is a diversity. 

\begin{Lemma} \label{lem: requirementtheirdivourdiv}
Suppose the set function $\delta:\mathcal{F}(X)\rightarrow \R$ satisfies the non-degeneracy condition (i.e., $\delta(A)=0$ iff the cardinality of $A$ is 0 or 1) and is monotone (i.e., $\delta(A)\leq \delta(B)$ if $A\subset B$).
Then $\delta$ is a diversity, if and only if,
\begin{equation}\label{eq:subadd}
\delta( A \cup B )  \leq \delta (A) + \delta (B)
\end{equation}
for all pairs $A, B \in \mathcal{F}(X)$ that intersect in exactly one point. 
\end{Lemma}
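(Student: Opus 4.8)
The plan is to treat the two implications separately, since only one of them carries any real content. For the forward direction, suppose $\delta$ is a diversity and let $A,B \in \mathcal{F}(X)$ intersect in exactly one point, say $A \cap B = \{c\}$. I would simply invoke the triangle inequality of Definition~\ref{def: BTdiv} with the nonempty set $C=\{c\}$. Since $c \in A$ and $c \in B$, we have $A \cup C = A$ and $B \cup C = B$, so
\[
\delta(A \cup B) \leq \delta(A \cup C) + \delta(B \cup C) = \delta(A) + \delta(B),
\]
which is precisely \eqref{eq:subadd}. Note this direction uses neither monotonicity nor non-degeneracy.

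The substance lies in the converse. Here I would lean on the observation of Bryant and Tupper recalled just above the statement: for a set function satisfying non-degeneracy, the triangle inequality is \emph{equivalent} to the conjunction of monotonicity and subadditivity over all pairs with nonempty intersection. Since monotonicity and non-degeneracy are among the standing hypotheses of the lemma, it suffices to upgrade the one-point subadditivity assumption \eqref{eq:subadd} to subadditivity for \emph{every} pair $A,B$ with $A \cap B \neq \emptyset$.

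The crux is a truncation argument that strips away the redundant part of the overlap. Given $A,B$ with $A \cap B \neq \emptyset$, I would fix a point $c \in A \cap B$ and set $B' := (B \setminus A) \cup \{c\}$. Three elementary facts then drive the proof: first, $A \cap B' = \{c\}$, so the one-point hypothesis applies to the pair $(A,B')$; second, $A \cup B' = A \cup B$, since deleting from $B$ only points already lying in $A$ leaves the union unchanged; and third, $B' \subseteq B$, since $c \in B$. Combining \eqref{eq:subadd} for $(A,B')$ with monotonicity then yields
\[
\delta(A \cup B) = \delta(A \cup B') \leq \delta(A) + \delta(B') \leq \delta(A) + \delta(B),
\]
the desired general subadditivity. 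I expect the only point demanding any care — and it is bookkeeping rather than a genuine obstacle — is verifying these three set-theoretic identities, in particular that passing from $B$ to $B'$ collapses the intersection with $A$ down to the single point $c$ while preserving $A \cup B$; once these are in hand, the conclusion is a direct appeal to the cited Bryant--Tupper characterization.
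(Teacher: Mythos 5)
Your proof is correct, and the forward direction plus the key truncation $B' = (B\setminus A)\cup\{c\}$ coincide exactly with what the paper does. Where you diverge is in how the converse is organized: you reduce the problem to the Bryant--Tupper characterization (non-degeneracy $+$ monotonicity $+$ subadditivity over pairs with nonempty intersection $\Leftrightarrow$ diversity), so that the only thing left to prove is the upgrade from ``intersect in exactly one point'' to ``nonempty intersection.'' The paper instead gives a self-contained verification of the triangle inequality $\delta(A\cup B)\le\delta(A\cup C)+\delta(B\cup C)$ by a three-way case analysis: (i) $A$ or $B$ empty, handled by monotonicity alone; (ii) $A,B$ nonempty but disjoint, handled by adjoining a point $x_0\in C$ to both sets so that $A\cup\{x_0\}$ and $B\cup\{x_0\}$ meet in exactly one point; (iii) $A\cap B\ne\emptyset$, which is precisely your truncation argument. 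Your route buys brevity and makes transparent that the lemma is a refinement of the cited observation, but it leans on a result of \cite{BT24} that the paper only paraphrases informally rather than states precisely; the paper's version costs two extra (easy) cases and in exchange is fully self-contained, with case (ii) in particular showing concretely how the one-point hypothesis also covers disjoint pairs. Both arguments are sound.
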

\begin{proof}
For the forward implication, simply apply the triangle inequality for $\delta$ with the choice of $C = A \cap B$, for sets $A$ and $B$ that intersect in exactly one point.

For the backwards implication, assume that $\delta( A \cup B )  \leq \delta (A) + \delta (B)$
for non-empty finite subsets $A, B \subseteq X$ that intersect in exactly one point. For $\delta$ to be a diversity, the inequality $\delta (A \cup B) \leq \delta (A \cup C) + \delta (B \cup C)$, for finite sets $A,B,C$ such that $C \neq \emptyset$, must be verified. For $c \in C$, let $\tilde{A} = (A \setminus B) \cup \{c\}$, $\tilde{B} = B \cup \{c\}$. Then, 
\[\delta(A \cup B) \leq \delta(\tilde{A} \cup \tilde{B}) \leq \delta(\tilde{A}) + \delta(\tilde{B}) \leq \delta(A \cup C) + \delta(B \cup C),\]
thus completing the proof.

\end{proof}

We now proceed toward the proof of Theorem \ref{thm: main}. We wish to prove that for any given metric space $(X,d)$, the set function $\kappa^t:=\exp \{C^{t}\}- 1$ defined on $\mathcal{F}(X)$ is a diversity. 
Let us observe at the outset that by Lemma~\ref{lem:derived-div}, proving this would immediately imply the fact that the metric complexity $C^t$ is also a diversity. 

We fix a metric space $(X,d)$. For notational convenience, we write $D^{t}(\cdot ) = \exp C^{t} (\cdot)=\kappa^t +1$. Without loss of generality, we will assume $t = 1$ and drop the superscript $t$ in this section. 

It is clear from the defining equation \eqref{eq: divfroment}
that $\kappa$ is both non-degenerate and monotone, inheriting these properties from the metric complexity $C$. Therefore, by Lemma~\ref{lem: requirementtheirdivourdiv}, it suffices to prove that 
\begin{equation*}
\kappa( A \cup B )  \leq \kappa (A) + \kappa (B)    
\end{equation*}
for all pairs $A, B \in \mathcal{F}(X)$ that intersect in exactly one point. Since $\kappa( \cdot ) = D ( \cdot ) - 1$,
the inequality to be verified reads
\begin{equation}\label{eq:subadd-for-D}
D( A \cup B ) + 1 \leq D (A) + D (B )    
\end{equation}
for non-empty finite subsets $A, B \subseteq X$ that intersect in exactly one point. 

Our proof of  Theorem~\ref{thm: main} rests on the notion of the wedge sum of pointed metric spaces.
Recall that a pointed metric space $(X, d, x_{0})$ is simply a metric space $(X,d)$ with a distinguished point $x_{0}$. The wedge sum of two pointed metric spaces is defined by gluing them at their distinguished points. 
\begin{Definition}
For two pointed metric spaces, $X_{1} = (X_{1}, d_{1} , x_{1} ), X_{2} = (X_{2}, d_{2} , x_{2} )$ we define their wedge sum $X_{1} \vee X_{2}$ as pointed metric space $(X , d , x_{0})$ on the underlying set $ X = X_{1} \sqcup X_{2} / x_{1} \sim x_{2}$, equipped with the metric
\[
d(x,y)=
\begin{cases}
d_{i} (x,y) & \textnormal{ if } x,y \in X_{i}, \\
d_{i} (x,x_{i}) + d_{j} (x_{j}, y) & \textnormal{ if } x \in X_{i}, y \in X_{j}, i \neq j,\\
\end{cases}
\]
with the distinguished point $x_{0}$ which is the equivalence class of the $x_{i}$. 
\end{Definition}

The reason for the usefulness of the wedge sum for our purposes is that it is the hardest structure to verify the condition \eqref{eq:subadd-for-D} for $\kappa$ to be a diversity, as demonstrated by the following lemma. 

\begin{Lemma}\label{lem:suff-wedge}
Suppose that for all non-empty finite metric subspaces $A,B \subseteq X$ with arbitrary choices of distinguished points, the inequality
\begin{equation}
D(A \vee B ) + 1 \leq D( A ) + D(B) ,
\end{equation}
or equivalently $\kappa( A \vee B )  \leq \kappa (A) + \kappa (B)$,
is satisfied. Then $\kappa$ is a diversity.
\end{Lemma}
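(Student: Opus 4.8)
The plan is to deduce the one-point-intersection subadditivity of $\kappa$ from the assumed wedge-sum inequality by exploiting a monotonicity property of complexity under enlargement of the metric. Since $\kappa$ is already non-degenerate and monotone, Lemma~\ref{lem: requirementtheirdivourdiv} reduces the entire statement to proving $\kappa(A \cup B) \leq \kappa(A) + \kappa(B)$ for every pair $A, B \in \mathcal{F}(X)$ meeting in exactly one point $x_{0}$; recalling $D = \kappa + 1$, this is precisely the inequality $D(A \cup B) + 1 \leq D(A) + D(B)$.

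The \emph{key observation} compares two metrics on the common underlying set of $A \cup B$. On the one hand, $A \cup B$ carries the metric induced from $X$. On the other hand, taking $x_{0}$ as the distinguished point of both $A$ and $B$, the wedge sum $A \vee B$ carries the detour metric, which agrees with the induced metric within $A$ and within $B$ but assigns to a cross pair $a \in A \setminus \{x_{0}\}$, $b \in B \setminus \{x_{0}\}$ the value $d(a,x_{0}) + d(x_{0},b)$. By the triangle inequality in $X$ this dominates $d(a,b)$, so the wedge metric is pointwise at least the induced metric. I would then invoke the monotonicity of complexity under enlarging distances, namely $D(A \cup B) \leq D(A \vee B)$. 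To justify this I pass to the similarity kernel: enlarging the metric replaces $Z(x,y) = e^{-d(x,y)}$ by an entrywise smaller kernel (the diagonal, equal to $1$, being unchanged), and using the independence of $\alpha$ from Definition~\ref{def: maxdiv} (\cite[Theorem 1]{LM16}) I may evaluate complexity at $\alpha = 2$, where $\exp C^{Z} = \left( \inf_{p} p^{\top} Z p \right)^{-1}$. Since $p^{\top} Z p$ can only decrease when $Z$ decreases entrywise and $p \geq 0$, the infimum can only decrease and hence complexity can only increase, giving the claimed inequality.

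Combining these two steps with the assumed wedge inequality yields
\[
D(A \cup B) + 1 \leq D(A \vee B) + 1 \leq D(A) + D(B),
\]
which is exactly the desired subadditivity, so Lemma~\ref{lem: requirementtheirdivourdiv} certifies that $\kappa$ is a diversity. I expect the \emph{main obstacle} to be the monotonicity step: the reduction to wedge sums is purely combinatorial, but one must verify both that complexity genuinely increases under metric enlargement and that it is intrinsic, so that the quantities $D(A \cup B)$, $D(A)$, and $D(B)$ are unambiguously the complexities of the relevant finite metric spaces, independent of how they are situated inside $X$. Note also that $A \vee B$ need not embed isometrically in $X$, so $D(A \vee B)$ must be read as the complexity of the abstract finite metric space, which is legitimate precisely because of this intrinsic character.
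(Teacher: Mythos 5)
Your proposal is correct and takes essentially the same route as the paper: reduce via Lemma~\ref{lem: requirementtheirdivourdiv} to sets meeting in exactly one point, observe that the wedge metric dominates the induced metric (equivalently, the identity map $A \vee B \to A \cup B$ is $1$-Lipschitz), and conclude $D(A \cup B) \leq D(A \vee B)$ by monotonicity of complexity under metric enlargement before applying the assumed wedge inequality. The only difference is that you supply an explicit justification of that monotonicity step via the $\alpha = 2$ representation $\exp C^{Z} = \left(\inf_{p} p^{\top} Z p\right)^{-1}$, whereas the paper dismisses it as easy to see.
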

\begin{proof}
By Lemma \ref{lem: requirementtheirdivourdiv}, we only need to consider finite subsets $A,B \subseteq X$ which intersect in exactly one point, say $x_{0}$. Consider $A,B$ as pointed metric spaces with $x_{0}$ as the distinguished point. The set-theoretic identity map $A \vee B \to A \cup B$ is then $1$-Lipschitz. It is easy to see that the metric complexity is non-increasing under $1$-Lipschitz maps. Hence, 
\[
D ( A ) + D(B ) - 1 \geq D( A \vee B ) \geq D(A \cup B ). \qedhere
\]
\end{proof}

In view of Lemma~\ref{lem:suff-wedge}, Theorem \ref{thm: main} would be an immediate consequence of the following interesting property of $\kappa^t$ for wedge sums.

\begin{Theorem} \label{thm: wedgediv}
    Let $A$ and $B$ be non-empty finite pointed metric spaces. Then, 
    \begin{equation} \label{eq: submodwhenCis1}
        \kappa^{t} (A \vee B) \leq \kappa^{t} (A) + \kappa^{t} (B).
    \end{equation}
\end{Theorem}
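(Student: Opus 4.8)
My plan is to prove the equivalent inequality $D(A\vee B)+1\le D(A)+D(B)$, where $D=\exp C$ and $\kappa=D-1$, and to locate the extra ``$+1$'' in the single point shared by the two summands. I would run the argument through the description of maximum diversity as a maximum of magnitudes: by the theory of Leinster and Meckes, $D(Y)=\max\{\Mag(S): \emptyset\ne S\subseteq Y,\ S \text{ carries a strictly positive weighting}\}$, and the independence of the supremum of $H_\alpha^{Z}$ from $\alpha$ lets me move freely between this and the probabilistic definition. The feature special to the wedge is that the Laplace kernel is multiplicative through the basepoint: for $a$ on the $A$-side and $b$ on the $B$-side, $Z(a,b)=e^{-d(a,x_0)}e^{-d(x_0,b)}$, so the off-diagonal block of the similarity matrix of $A\vee B$ is the rank-one matrix $uv^{\mathsf T}$, where $u$ and $v$ collect the similarities to the basepoint on each side.

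The first concrete step is a magnitude identity for wedges. Writing a weighting in basepoint-first block form, I would check that, given positive weightings of pointed spaces $S_1\ni x_0$ and $S_2\ni x_0$, the vector that keeps the off-basepoint entries and assigns basepoint weight $w^{S_1}_0+w^{S_2}_0-1$ solves the weighting equations of $S_1\vee S_2$; the rank-one coupling is exactly what reproduces the required cross terms. This gives $\Mag(S_1\vee S_2)=\Mag(S_1)+\Mag(S_2)-1$. Now let $S^\ast$ be a positively weighted subset of $A\vee B$ realizing $D(A\vee B)$, and put $S_1=S^\ast\cap A$, $S_2=S^\ast\cap B$. In the main case $x_0\in S^\ast$ the weighting equation holds at $x_0$, which forces the basepoint constraint to be tight and lets me read off positive weightings of $S_1$ and $S_2$ from that of $S^\ast$; then $\Mag(S_1)\le D(A)$ and $\Mag(S_2)\le D(B)$ by maximality over subsets, and the identity yields $D(A\vee B)=\Mag(S^\ast)=\Mag(S_1)+\Mag(S_2)-1\le D(A)+D(B)-1$. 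The degenerate case where one of $S_1,S_2$ is empty follows from monotonicity of $D$ together with $D(\cdot)\ge1$.

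The main obstacle is the remaining case, in which the optimal support $S^\ast$ omits the gluing point while meeting both sides; this genuinely occurs, for instance when a point infinitesimally off $x_0$ on one side is marginally more spread out than $x_0$ itself. Here the restrictions of the weighting to $S_1,S_2$ solve perturbed equations $Z_{S_i}w_i=\mathbf{1}-(\text{coupling})$ rather than $Z_{S_i}w_i=\mathbf{1}$, so they need not be genuine weightings, and comparing $A\vee B$ with the disjoint-at-infinity union through a $1$-Lipschitz collapse only gives $D(A\vee B)\le D(A)+D(B)$, losing the crucial ``$-1$''. I would try to remove this case by induction on $\#(A)+\#(B)$: adjoining $x_0$ to $S^\ast$ produces a genuine wedge $(S_1\cup\{x_0\})\vee(S_2\cup\{x_0\})$ on strictly fewer points whenever $S^\ast\ne A'\cup B'$ (where $A'=A\setminus\{x_0\}$), so that monotonicity of $D$ plus the inductive hypothesis closes those subcases.

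This leaves the single residual configuration $S^\ast=A'\cup B'$, where the maximum diversity is attained on every point except the basepoint, and this is where I expect the real work to lie. The plan there is to feed the positive weighting $(w_1,w_2)$ of $A'\cup B'$ back into the separate spaces: with $\beta=v^{\mathsf T}w_2$ and $\gamma=u^{\mathsf T}w_1$, the vector $(\beta,w_1)$ is a positive weighting of $A=A'\cup\{x_0\}$ exactly when the saturation $\beta+\gamma=1$ holds, giving $D(A)+D(B)\ge(\beta+\gamma)+D(A\vee B)$ and hence the claim. When $\beta+\gamma<1$ this construction only certifies feasibility and undershoots, so I would instead invoke the sharper bound $D(A)\ge\Mag(A)$ (valid because $A$ and $B$ then carry positive weightings of their own) and estimate the magnitudes of $A$, $B$, and $A'\cup B'$ directly through the rank-one coupling and the complementary-slackness inequality $(Zp^\ast)_{x_0}\ge 1/D(A\vee B)$ at the omitted basepoint; controlling this last estimate is the crux of the proof.
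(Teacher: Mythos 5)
Your main case ($x_{0}$ in the optimal support) is sound and is essentially the paper's own argument translated into weighting language: the paper takes the complexity-attaining measure $\rho=\mu+\nu+\epsilon\delta_{x_0}$ on $A\vee B$ and shows that $\widetilde{\mu}=\mu+(N+\epsilon)\delta_{x_0}$ and $\widetilde{\nu}=\nu+(M+\epsilon)\delta_{x_0}$ are scalar multiples of weight measures of their supports, which is exactly your splitting of the wedge weighting; the weighting equation at $x_{0}$ produces the $-1$ in both formulations. The genuine gap is in the case where the optimal support omits the basepoint. Your induction does reduce matters to the residual configuration $S^{*}=A'\cup B'$, but there your sketch goes wrong in two ways. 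First, the complementary-slackness condition you quote, $(Zp^{*})_{x_{0}}\ge 1/D(A\vee B)$, says precisely that $\beta+\gamma\ge 1$; so the subcase $\beta+\gamma<1$ that you spend effort on cannot occur, while the subcase that actually needs an argument is $\beta+\gamma>1$, where $(w_1,\beta)$ fails to be a weighting of $A$ because the equation at $x_{0}$ is \emph{over}satisfied. Second, the fallback inequality $D(A)\ge\Mag(A)$ is not available: by the Leinster--Meckes theorem you invoke, $D(A)$ is the maximum of $\Mag(S)$ over subsets $S$ admitting nonnegative weightings, so when $A$ itself admits one you get $D(A)\le\Mag(A)$, the opposite direction; and in the oversaturated case $A$ need not admit a positive weighting at all. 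So the residual case is left open, as you yourself acknowledge.

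The missing step is short and removes the need for the induction entirely. Do not try to turn $(w_1,\beta)$ into a weighting of $A$; instead use its mass-one normalization $p_A$ as a \emph{test measure} in the variational formula $D(A)\ge D_{\alpha}(p_A)$ with $\alpha=0$. Since $\bigl(Z(w_1+\beta\delta_{x_0})\bigr)_a=1$ for $a\in A'$ and $=\beta+\gamma$ at $x_{0}$, one computes
\[
D_{0}(p_A)\;=\;\sum_{x}\bigl(Z(w_1+\beta\delta_{x_0})\bigr)_x^{-1}\,(w_1+\beta\delta_{x_0})_x\;=\;|w_1|+\frac{\beta}{\beta+\gamma},
\qquad
D_{0}(p_B)\;=\;|w_2|+\frac{\gamma}{\beta+\gamma},
\]
and the two basepoint contributions sum to exactly $1$ regardless of the value of $\beta+\gamma$, giving $D(A)+D(B)\ge |w_1|+|w_2|+1=\Mag(S^{*})+1=D(A\vee B)+1$. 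This is precisely the paper's second case (there written as $D(A)\ge Ra+N/(M+N)$ and $D(B)\ge Rb+M/(M+N)$ with $a+b=1$), and with it your magnitude-theoretic route closes.
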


\begin{proof}
We shall write the proof in the language of metric complexity. For convenience, we write $D_{\alpha}(\cdot)$ for $\exp H^{Z}_{\alpha} (\cdot)$.

Let $\rho$ be a metric complexity-attaining probability measure on $A \vee
B$, and denote by $x_{0}$ the common point of $A$ and $B$ in $A \vee B$.
We will consider two cases, according to whether $x_{0} \in
\operatorname{supp} \rho$.

Suppose first that $x_{0} \in \operatorname{supp} \rho$. Then we can write
$\rho = \mu + \nu + \epsilon \delta_{x_{0}}$, where $\mu$ is supported on
$A \setminus \{x_{0}\}$, $\nu$ is supported on $B \setminus \{x_{0}\}$, and
$\epsilon > 0$. We denote $a = \sum_{x} \mu (x) , b = \sum_{x} \nu (x)$,
\[
  M = Z\mu(x_{0}) = \sum_{x} e^{-d(x,x_{0})} \mu(x), \quad
  N = Z\nu(x_{0}) = \sum_{x} e^{-d(x,x_{0})} \nu(x),
\]
and let $R = D( A \vee B )$.  We have $Z\rho(x) = 1/R$ for each $x
\in \operatorname{supp} \rho$ \cite[Lemma 3]{LM16},
which implies that
\[
  \frac{1}{R} =
  \begin{cases}
    Z\mu(x) + e^{-d(x,x_{0})} (N + \epsilon) = Z(\mu + (N + \epsilon) \delta_{x_{0}})(x)
    & \text{ if } x \in
    \operatorname{supp} \mu, \\
    Z\nu(x) + e^{-d(x,x_{0})} (M + \epsilon) = Z(\nu + (M + \epsilon) \delta_{x_{0}})(x)
    & \text{ if } x \in
    \operatorname{supp} \nu, \\
    M + N + \epsilon & \text{ if } x = x_{0}.
  \end{cases}
\]

Now define $\widetilde{\mu} = \mu + (N+\epsilon) \delta_{x_{0}}$ and
$\widetilde{\nu} = \nu + (M + \epsilon) \delta_{x_{0}}$.  Then $Z \widetilde{\mu} (x)
= M + N + \epsilon = \frac{1}{R}$ for all $x \in \operatorname{supp}
\widetilde{\mu} = (\operatorname{supp} \mu) \cup \{x_{0}\}$, and also
$Z \widetilde{\nu} (x)
= N + M + \epsilon = \frac{1}{R}$ for all $x \in \operatorname{supp}
\widetilde{\nu} = (\operatorname{supp} \nu) \cup \{x_{0}\}$. Therefore,
\[
\begin{split}
  D (A ) + D( B ) &\ge D_{0}( \tilde{\mu}/(a + N + \epsilon)) + D_{0}(\tilde{\nu}/(b + M + \epsilon)) = R \sum_{x \in \support \widetilde{\mu}} \widetilde{\mu}(x)  + R
   \sum_{x \in \support \widetilde{\nu}} \widetilde{\nu} (x) \\ &= R (a + N + \epsilon + b + M + \epsilon ) = R + 1, \\
   \end{split}
\]
since $a + b + \epsilon = 1$ and $M + N + \epsilon = 1/R$.

\medskip

Now suppose that $x_{0} \notin \operatorname{supp} \rho$.  In this case we
can similarly write $\rho = \mu + \nu$, and we have
\[
  \frac{1}{R} =
  \begin{cases}
    Z\mu(x) + e^{-d(x,x_{0})} N  = Z(\mu + N \delta_{x_{0}})(x)
    & \text{ if } x \in
    \operatorname{supp} \mu, \\
    Z\nu(x) + e^{-d(x,x_{0})} M = Z(\nu + M \delta_{x_{0}})(x)
    & \text{ if } x \in
    \operatorname{supp} \nu.
  \end{cases}
\]

We let $\widetilde{\mu} = \mu + N\delta_{x_{0}}$ and
$\widetilde{\nu} = \nu + M \delta_{x_{0}}$, then estimate
\[
 D_{0} (\widetilde{\mu}/(a + N)) = \sum_{x \in \support \widetilde{\mu}}
  (Z\widetilde{\mu})^{-1} (x) \widetilde{\mu} (x)
  = Ra + N (Z \widetilde{\mu})(x_{0})^{-1} = Ra + \frac{N}{M+N}, 
\]
and similarly
\[
  D_0(\widetilde{\nu}/(b+M))=\sum_{x \in \support \widetilde{\nu}}
  (Z\widetilde{\nu})^{-1} (x) \widetilde{\nu} (x) = Rb + \frac{M}{M+N}.
\]
Therefore
\[
  D(A) + D(B ) \ge D_0(\widetilde{\mu}/(a + N)) +
  D_0(\widetilde{\nu}/(b+M)) = R + 1
\]
since $a + b = 1$.   
\end{proof}
Thus, the proof of Theorem \ref{thm: main} is concluded.

\section{Two properties of metric complexity}
\label{sec:prop}

We first prove the Minkowski-superadditivity of the diversity $\kappa$ arising from complexity, which is asserted by Theorem \ref{thm: cdbm}.

\begin{proof}[Proof of Theorem \ref{thm: cdbm}]
Note that $\kappa (\cdot)$ is translation invariant. Thus, without loss of generality, the sets $A$ and $B$ can be assumed to satisfy $\max_{x \in A} x = 0 = \min_{y \in B} y$. Under this assumption, we have $A \cup B \subseteq A + B$, and $A \cup B$ is isometric to the wedge sum $(A, 0) \vee (B,0)$. Thus,
\[
\kappa (A + B) \geq \kappa (A \cup B) = \kappa (A \vee B) = \kappa (A) + \kappa (B),
\]
where the last equality is due to \cite[Corollary 2.3.3]{Lei13}.
This proves the first part. For the second part, we use an explicit formula for $D(E)$ \cite[Theorem 4.1]{LM17:1}, for compact $E \subseteq \R$, to observe that the function $\lambda \mapsto  D (\lambda \cdot E ) -
(\lambda D(E) + (1 - \lambda ))$ is always concave. This leads to inequalities
\[
\quad  D((1- \lambda) \cdot A) \geq (1-\lambda)D(A) + \lambda, D(\lambda \cdot B) \geq \lambda D(B) + (1 - \lambda),
\]
when applied to $E =A, E=B$, respectively, since a concave function of $\lambda$ equal to zero at the endpoints $\lambda = 0, \lambda =1$ must stay non-negative in the interval $[0,1]$. Applying the conclusion of the first part to the sets $ (1-\lambda ) A $ and $\lambda B$,
\[
\begin{split}
D( (1- \lambda) A + \lambda B ) 
& \geq D( (1- \lambda) A) + D( \lambda B ) - 1 \\
& \geq (1-\lambda)D(A) + \lambda +  \lambda D(B) + (1 - \lambda) - 1 \\
& = (1-\lambda)D(A) + \lambda D(B).
\end{split}
\]
When written in terms of $\kappa (\cdot)$, this is exactly the inequality \eqref{eq: BrunnMinkowski}.
\end{proof}

Our investigation of fractional subadditivity proceeds via the corresponding property for exponentiated $\alpha$-complexity of probability measures. 

\begin{Proposition}\label{prop: fracsubmultcomplex}
Let $(X,d)$ be a metric space equipped with a similarity kernel $Z(x,y) = e^{- d (x,y)}$. Let $\mu_1, \ldots, \mu_n$  be probability measures on $X$, and  $\mu$ be a mixture of them: 
$$\mu =\sum_{i=1}^{n} \lambda_i \mu_i,$$ 
where $\lambda_i \geq 0$ for each $i$ and $\sum_{i=1}^{n} \lambda_i=1$.
Then, for any $ \alpha \in [0,\infty]$, 
and any fractional partition $\beta$ on $[n]$,
\[ \exp H^{Z}_{\alpha}(\mu) \leq \sum_{s \in 2^{[n]}} \beta (s) \exp H^{Z}_{\alpha}(\mu_s),\]
where $\mu_s=\frac{\sum_{i \in s} \lambda_i \mu_i }{\sum_{i \in s} \lambda_i}.$
\end{Proposition}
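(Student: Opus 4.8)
The plan is to express $D_\alpha := \exp H_\alpha^Z$ uniformly as a weighted power mean, and then to split the argument into a pointwise comparison step and a single application of Jensen's inequality. Writing $r = 1-\alpha$, one checks directly from Definition~\ref{def: kernelalphacomplex} that for $\alpha \in [0,1)\cup(1,\infty)$,
\[
D_\alpha(\nu) = \left( \sum_x \nu(x)\,(Z\nu)_x^{-r} \right)^{1/r},
\]
i.e. $D_\alpha(\nu)$ is the power mean of order $r$ of the values $(Z\nu)_x^{-1}$ taken against the weights $\nu(x)$; the cases $\alpha=1$ and $\alpha=\infty$ are the geometric mean ($r=0$) and the minimum ($r=-\infty$), which I would recover at the end by continuity of $\alpha \mapsto H_\alpha^Z$ (and $\alpha=\infty$ can also be checked directly). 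The key observation is that as $\alpha$ ranges over $[0,\infty]$ the exponent $p := 1/r$ never lands in $(0,1)$: it satisfies $p \geq 1$ when $\alpha \in [0,1)$ and $p < 0$ when $\alpha \in (1,\infty)$. This is exactly the range in which $t \mapsto t^p$ is convex on $(0,\infty)$, which is what the final step will need.

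Next I would record the two structural facts coming from disjointness of supports. Set $\Lambda_s = \sum_{i\in s}\lambda_i$ and $f(x) = (Z\mu)_x^{-1}$. First, since $Z \geq 0$ and $(Z\mu_s)_x = \Lambda_s^{-1}\sum_{i\in s}\lambda_i (Z\mu_i)_x$ is obtained from $(Z\mu)_x = \sum_i \lambda_i (Z\mu_i)_x$ by discarding nonnegative terms, one has the pointwise bound $(Z\mu)_x \geq \Lambda_s (Z\mu_s)_x$ on $\support \mu_s$, equivalently $(Z\mu_s)_x^{-1} \geq \Lambda_s f(x)$ there. As power means are monotone and positively homogeneous in their arguments for every order $r$, this gives
\[
D_\alpha(\mu_s) \geq \Lambda_s \left( \sum_{x \in \support \mu_s} \mu_s(x)\, f(x)^r \right)^{1/r}.
\]
Second, disjointness yields $\mu_s(x) = \mu(x)/\Lambda_s$ on $\support \mu_s$, so $\mu_s$ is literally $\mu$ conditioned on $\bigcup_{i\in s}\support \mu_i$; writing $W(s) = \sum_{x \in \support \mu_s} \mu(x)\, f(x)^r$ the right-hand side above becomes $\Lambda_s^{\,1-p}\, W(s)^p$.

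It then remains to prove $D_\alpha(\mu) \leq \sum_s \beta(s)\,\Lambda_s^{1-p} W(s)^p$, and here the fractional-partition hypothesis enters decisively. The identity $\sum_{s\ni i}\beta(s)=1$ yields both $\sum_s \beta(s)\Lambda_s = 1$ and $\sum_s \beta(s) W(s) = W([n])$, so that $\gamma(s):=\beta(s)\Lambda_s$ is a \emph{probability measure} on $2^{[n]}$; setting $V(s) := W(s)/\Lambda_s$ one has $D_\alpha(\mu) = W([n])^p = (\E_\gamma V)^p$, while $\sum_s \beta(s)\Lambda_s^{1-p}W(s)^p = \E_\gamma[V^p]$. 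The desired inequality is thus precisely $(\E_\gamma V)^p \leq \E_\gamma[V^p]$, which is Jensen's inequality for the convex map $t\mapsto t^p$, valid exactly because $p\notin(0,1)$. Chaining the two steps closes the case $\alpha\in[0,1)\cup(1,\infty)$, and the endpoints follow by letting $\alpha\to 1$ and $\alpha\to\infty$. I expect the only genuine subtlety to be bookkeeping: confirming that the lossy-looking comparison $D_\alpha(\mu_s) \geq \Lambda_s ( \sum_x \mu_s(x) f(x)^r )^{1/r}$ does not cost too much (it does not, since the residual inequality is exactly of Jensen type) and that the convexity direction is correct across both regimes $\alpha<1$ and $\alpha>1$. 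One should also assume $\lambda_i>0$, dropping null components, so that every $\mu_s$ with $\beta(s)>0$ is well defined.
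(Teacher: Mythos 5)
Your proposal is correct and is essentially the paper's own argument in different packaging: both proofs rest on the same two inequalities, namely the pointwise kernel bound $Z\mu \geq \Lambda_s\, Z\mu_s$ on $\support \mu_s$ and Jensen's inequality for the convex map $x \mapsto x^{1/(1-\alpha)}$ applied with the weights $\beta(s)\Lambda_s$ (which sum to $1$ by the fractional-partition identity), with the endpoint cases $\alpha = 1,\infty$ handled by limits. The only differences are cosmetic --- you phrase everything as power means of $(Z\mu)^{-1}$ and apply the two steps in the opposite order --- so no further changes are needed.
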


\begin{proof}
For a probability measure $\mu$, as mentioned earlier, we write $Z \mu (x) = \int Z (x,y) \d \mu (y)$. For $ \alpha \in [0,1) \cup (1, \infty)$, we have
\begin{align*}
D_{\alpha}(\mu)
& = \left( \int (Z \mu)^{\alpha-1} \d \mu \right) ^{1/(1-\alpha)} 
=\left(\sum_{i \in [n]} \lambda_i \int (Z \mu)^{\alpha-1} \d \mu_i \right) ^{1/(1-\alpha)} \\
&= \left(\sum_{i \in [n]} \left( \sum_{s\in 2^{[n]}: i \in s} \beta(s) \right) \lambda_i \int (Z \mu)^{\alpha-1} \d \mu_i \right) ^{1/(1-\alpha)}\\
& = \left( \sum_{s\in 2^{[n]}} \beta(s) \sum_{i \in s} \bigg\{ \lambda_i \int (Z \mu)^{\alpha-1} \d \mu_i \bigg\} \right) ^{1/(1-\alpha)} \\
& \leq  \left( \sum_{s\in 2^{[n]}} \beta(s) \sum_{i \in s} \bigg\{ \lambda_i \int \bigg( \sum_{j \in s} \lambda_j Z \mu_j \bigg)^{\alpha-1} \d \mu_i \bigg\} \right) ^{1/(1-\alpha)} ,
\end{align*}
where to justify the inequality we use $\sum_{i \in s} \lambda_{i} Z \mu_{i} \leq Z \mu$, and distinguish the two cases $\alpha < 1$ and $\alpha >1$. When $\alpha < 1$, the function $x^{\alpha - 1}$ is decreasing on $(0, \infty)$ but $1/(1-\alpha) >0$. On the other hand, when $\alpha > 1$, the function $x^{\alpha - 1}$ is increasing on $(0, \infty)$ but $1/(1-\alpha) <0$. 
Thus, setting $\lambda_s=\sum_{i\in s} \lambda_i$, we have 
\begin{align*}
D_{\alpha}(\mu)
& \leq \left( \sum_{s\in 2^{[n]}} \beta(s) \sum_{i \in s} \bigg\{ \lambda_i \int ( \lambda_s Z \mu_s)^{\alpha-1} \d \mu_i \bigg\} \right) ^{1/(1-\alpha)} \\
&= \left( \sum_{s\in 2^{[n]}} \beta(s)  \lambda_s \int ( \lambda_s Z \mu_s)^{\alpha-1} \d \mu_s  \right) ^{1/(1-\alpha)} .
\end{align*}

Note that for $\alpha \in [0,1) \cup (1, \infty)$, $x^{1/(1-\alpha)}$ is convex, and 
\[ \sum_{s\in 2^{[n]}} \beta(s) \lambda_s=1.\]
Therefore, by an application of Jensen's inequality,
\begin{align*}
 D_{\alpha}(\mu)
& \leq   \sum_{s\in 2^{[n]}} \beta(s) \lambda_s \left[ \int (\lambda_s Z \mu_s)^{\alpha-1} \d \mu_s \right] ^{1/(1-\alpha)} \\
& =   \sum_{s\in 2^{[n]}} \beta(s)\left[ \int (Z \mu_s)^{\alpha-1} \d \mu_s \right] ^{1/(1-\alpha)} \\
&=\sum_{s \in 2^{[n]}} \beta(s) D_{\alpha}(\mu_s).
\end{align*}
The cases $\alpha=1$ and $\alpha=\infty$ follow by taking limits.
\end{proof}

We now obtain as a consequence the fractional subadditivity result of Theorem~\ref{thm: subadditivecomplex} for metric complexity.

\begin{proof}[Proof of Theorem \ref{thm: subadditivecomplex}]
Note that the proof of the theorem is straightforward from the previous proposition when the $A_{i}$ are disjoint, since any $\mu \in \mathcal{P}(\cup_{i=1}^{n} A_{i})$ in this case decomposes naturally into a mixture of $\mu_{i} \in \mathcal{P}(A_{i})$ by the law of total probability.

Now suppose $F \subseteq \cup_{i \in [n]} F_{i}$ and all sets are finite. Let $\widetilde{F}_{i} = F_{i} \setminus \cup_{k = 1}^{i-1} F_{k}$ denote the ``disjointification'' of the cover. Then $F \subseteq \cup_{i \in [n]} \widetilde{F}_{i}$, as well as, $\cup_{i \in s} \widetilde{F}_{i} \subseteq \cup_{i \in s} F_{i}$ for every $s \in 2^{[n]}$. By the theorem for disjoint sets and monotonicity of complexity
with respect to inclusion, 
\[
D(F) \leq \sum_{s \in 2^{[n]}} \beta(s) D \left( \bigcup_{i \in s } \widetilde{F}_{i} \right) \leq \sum_{s \in 2^{[n]}} \beta(s) D \left( \bigcup_{i \in s } F_{i} \right).
\]
Finally, consider compact sets $A, A_{1} , \ldots , A_{n}$ such that $A \subseteq \bigcup_{i} A_{i}$. For any fixed finite $F \subseteq A$, set $F_{i} = A_{i} \cap F$. Then $F \subseteq F_{i}$ and so, 
\[
D(F) \leq \sum_{s \in 2^{[n]}} \beta (s) D \left( \bigcup_{i \in s} F_{i} \right) \leq \sum_{s \in 2^{[n]}} \beta (s) D \left( \bigcup_{i \in s} A_{i}\right).
\]
Taking supremum over all finite $F \subseteq A$ finishes the proof.
\end{proof}

\section{Discussion}
\label{sec:concl}

We conclude with some remarks and questions.

\begin{enumerate}
    \item The subadditivity property of $\kappa^{t}$ for wedge sums of pointed metric spaces, which is stated in Theorem~\ref{thm: wedgediv} and arises in our proof of Theorem \ref{thm: main}, may be of independent interest.  It would be interesting to know if this property 
    can be extended to submodularity.
\begin{Question}
   Let $A, B, C$, be non-empty finite pointed metric spaces. Then, under what additional conditions do we have
   \begin{equation} \label{eq: submodularityQ}
        \kappa^{t} (A \vee B \vee C) + \kappa^{t} (C) \leq \kappa^{t} (A \vee C) + \kappa^{t} (B \vee C)?
   \end{equation}
\end{Question}
 In the language of metric complexity, Equation \eqref{eq: submodularityQ} asks for the same inequality:  $\exp C^{t} (A \vee B \vee C) + \exp C^{t} (C) \leq \exp C^{t} (A \vee C) + \exp C^{t} (B \vee C)$. Denote \emph{magnitude} by $M^{t}$ (we will not define it here, but refer the reader to \cite{LM17:1} where $M^{t}(A)$ is denoted $\vert t \cdot A \vert$). Then, under the assumption that $M^{t}(A), M^{t}(B), M^{t}(C)$ are well-defined (which, unlike for metric complexity, may not always be the case), the equality $M^{t} (A \vee B \vee C) + M^{t} (C) = M^{t} (A \vee C) + M^{t} (B \vee C)$ holds. This is a simple consequence of the corresponding property when $C$ is a singleton \cite[Corollary 2.3.3]{Lei13} and associativity of the wedge sum.

 We also remark that submodularity properties of related quantities such as entropies, cardinalities and volumes under various operations have been extensively studied (see, e.g., \cite{Fuj78,Mad08:itw,FMZ24,FMMZ24}).

\item Proposition~\ref{prop: fracsubmultcomplex} may be stated as follows: for any $\alpha\in [0,\infty]$, the exponentiated $\alpha$-complexity of a mixture is fractionally subadditive. An immediate consequence, by considering the scaled metric $td$ and letting $t\rightarrow\infty$ (or equivalently by taking the similarity matrix $Z$ to be the identity matrix), is that the exponentiated $\alpha$-R\'enyi entropies of mixtures (of probability distributions on a finite alphabet, say) are fractionally subadditive. As far as we know, this fact has not been observed in the literature, and may well find fruitful applications in information theory. 

Indeed, the simplest case of Proposition 3.1, namely, for R\'enyi entropy with $n=2$, $\lambda_1=\lambda_2=1/2$, and $\beta(\{1\})=\beta(\{2\})=1$, implies that
\begin{align}\label{rev-conc-renyi}
H_\alpha\bigg(\frac{\mu_1+\mu_2}{2}\bigg)
&\leq \log \bigg[ e^{H_\alpha(\mu_1)}+e^{H_\alpha(\mu_2)}\bigg] .
\end{align}
For $\alpha=1$, it is a classical fact (see, e.g., \cite{MTBMS22}) complementing the concavity of the Shannon entropy that 
$$
H_1\bigg(\frac{\mu_1+\mu_2}{2}\bigg)
\leq \log 2 +  \bigg[ \frac{H_1(\mu_1)+H_1(\mu_2)}{2}\bigg] .
$$
The inequality \eqref{rev-conc-renyi} provides an extension of this inequality for mixtures from the Shannon entropy to R\'enyi entropies of any order $\alpha\in [0,\infty]$,
but at the cost of a weaker upper bound; note by the concavity of the logarithm and Jensen's inequality that 
$$
\frac{H_\alpha(\mu_1)+H_\alpha(\mu_2)}{2}
\leq \log\bigg[\frac{e^{H_\alpha(\mu_1)}+e^{H_\alpha(\mu_2)}}{2}\bigg].
$$

\item Theorem~\ref{thm: subadditivecomplex} says that the exponentiated complexity of compact sets is 
fractionally subadditive with respect to unions. It is a curious fact proved in \cite{BM24} that for compact subsets of a Euclidean space, the volume functional (which is akin to exponentiated complexity) is fractionally {\it superadditive} with respect to Minkowski summation.

\item   The Cauchy--Davenport inequality (for $\R^n$) states that if $A , B \subseteq \R^n$ are non-empty finite sets, then $\# (A+ B) \geq \# (A) + \# (B) - 1$, which implies $\kappa^{\infty} (A + B) \geq \kappa^{\infty}(A) + \kappa^{\infty} (B)$ for the diversity $\kappa^{\infty} (\cdot)$ defined by $\max \{ \# (\cdot ) - 1 , 0 \}$. Thus, when $n=1$, the inequality \eqref{eq: CauchyDavenport} is a generalization of the Cauchy--Davenport inequality. It is a natural question whether our result extends to dimensions $n > 1$. A similar comment can be made regarding the generalization of the one-dimensional Brunn--Minkowski inequality contained in \eqref{eq: BrunnMinkowski}. We note that R\'enyi entropy versions of the Cauchy--Davenport inequality have been considered on the integers in \cite{MWW19, MWW21}.

\item  Proposition \ref{prop: fracsubmultcomplex} is true under bare assumptions on the kernel $Z$, as evident in its proof. For simplicity, we state it only in the metric setting with the Laplace kernel. 

\item Theorem \ref{thm: subadditivecomplex} has a nice interpretation for finite metric spaces. Let us first note that, by considering the fractional partition 
$$\beta(s)=
\bigg\{\begin{array}{cc} 
\frac{1}{n-1}, &\quad |s|=n-1 \\
0, &\quad \text{otherwise}
\end{array},
$$
Theorem \ref{thm: subadditivecomplex} implies that if $A$ is a compact metric space and $\{A_1, \ldots, A_n\}$ is a collection of subsets that cover $A$ (i.e., $A=\cup_{i\in [n]} A_i$), then
$$
\frac{D(A)}{n} \leq \frac{1}{n} \sum_{i\in [n]} \frac{D(\cup_{j\neq i} A_j)}{n-1} .
$$
Consider the special case where $A$ is a finite metric space of cardinality $n$, and the collection $\{A_1, \ldots, A_n\}$ is the collection of all singletons. Then the inequality above says that the ``complexity per element'' of $A$ is at most the average ``complexity per element'' of a randomly drawn subset of size $n-1$. In other words, it captures the very natural intuition that the complexity per element decreases on average as the cardinality of the metric space increases.
\end{enumerate}

\bibliographystyle{amsplain}
\bibliography{pustak}
\end{document}